\numberwithin{equation}{section}
\newtheorem{theorem}{Theorem}[section]
\newtheorem{claim}{Claim}[section]
\newtheorem{lemma}{Lemma}[section]
\newtheorem{example}{Example}[section]
\newcommand{\<}{\langle}
\renewcommand{\>}{\rangle}
\begin{document} 
\title{ Linear elliptic system with nonlinear boundary conditions without Landesman-Lazer conditions }

\author[Alzaki Fadlallah\hfil ]
{Alzaki Fadlallah}

\address{Alzaki.M.M. Fadlallah \hfill\break
Department of Mathematics, University of Alabama at
Birmingham \hfill\break
Birmingham, Alabama 35294-1170, USA }
\email{ zakima99@@math.uab.edu }


\begin{abstract}
The boundary value problem is examined for the system of elliptic equations of from 
$-\Delta u + A(x)u = 0 \quad\text{in } \Omega,$ where  $A(x)$ is positive semidefinite matrix on $\mathbb{R}^{{k}\times{k}},$ and 
$\frac{\partial u}{\partial \nu}+g(u)=h(x) \quad\text{on } \partial\Omega$ It is assumed 
that $g\in C(\mathbb{R}^{k},\mathbb{R}^{k})$ is a bounded function which may vanish at infinity. The proofs are based on Leray-Schauder 
degree methods.

\end{abstract}
\maketitle

\section{Introduction}
Let $\mathbb{R}^{k}$ be real $k-$dimensional space, if $w\in\mathbb{R}^{k}$, then $|w|_{E}$ denotes the Euclidean norm of $w.$
Let $\Omega\subset\mathbb{R}^N$ , $N\geq 2$ is a bounded domain  with boundary
$\partial \Omega$ of class $C^{\infty}$. Let $g\in C^{1}(\mathbb{R}^{k},\mathbb{R}^{k}),$ $h\in C(\partial\Omega,\mathbb{R}^{k}),$
and the matrix 
$$A(x)=\left[
\begin{array}{cccc}
 a_{1}(x)&0&\cdots&0\\ 
0&a_{2}(x)&\cdots&0)\\ 
\vdots & \vdots  &\ddots &\vdots  \\ 
0&0&\cdots&a_{k}(x)
\end{array}\right]
$$\\
Verifies the following conditions:
\begin{enumerate}
\item[(A1)] The functions $a_{i}:\Omega\to\mathbb{R},$ $a_{i}(x)\ge 0,~~ \forall i=1,\cdots,k ~~x\in\Omega$
with strict inequality on a set of positive measure.
 \item[(A2)] $A(x)$ is positive semidefinite matrix on $\mathbb{R}^{{k}\times{k}},$ almost everywhere $x\in\Omega,$ and 
 $A(x)$ is positive definite on a set of positive measure with $a_{ij}\in L^{p}(\Omega)~~ \forall ~i=1,\cdots,k$ 
for $p>\frac{N}{2}$ when $N\geq 3$, and $p>1$ when $N=2$
 \end{enumerate}
 We will study the solvability of 
\begin{equation}\label{e1}
\begin{gathered}
 -\Delta u + A(x)u = 0 \quad\text{in } \Omega,\\
\frac{\partial u}{\partial \nu}+g(u)=h(x) \quad\text{on }
\partial\Omega,
\end{gathered}
\end{equation}
The interest in this problem is the resonance case at the boundary with a bounded nonlinearity, we will assume that $g$ a bounded function, 
and there is a constant  $R>0$ such that 
\begin{equation}\label{e2}
\begin{gathered}
 |g(w(x))|_{E}\leq R \quad\ \forall ~ w\in \mathbb{R}^{k} ~~\& ~~  x\in \partial\Omega
\end{gathered}
\end{equation}
Our assumptions allow that $g$ is not only bounded, but also may be vanish at infinity i.e.; 
\begin{equation}\label{e3}
\begin{gathered}
 \displaystyle\lim_{{|w|_{E}}\to\infty}g(w)=0\in \mathbb{R}^{k} 
 \end{gathered}
\end{equation}
Condition (\ref{e3})is not required by Our assumptions, but allowing for it is the main result of this paper.\\
In case of the scalar equation $k=1$ and $g$ doesn't satisfy condition (\ref{e3})
but satisfying  the Landesman-Lazer condition 
$$g_{-}<\bar{h}<g^{+}$$ where 
$\displaystyle\lim_{w\to-\infty}g(w)=g_{-},$ $\bar{h}=\frac{1}{|\partial\Omega|}\int_{\partial\Omega}h\,dx$, 
$\displaystyle\lim_{w\to\infty}g(w)=g^{+}$\\
 And $A(x)=0\in\mathbb{R}^{k\times k}$
 Then it is well know that there is a solution (\ref{e1}). 
The first results when the nonlinearity in the equation in scalar case was done  by Landesman and Lazaer \cite{LLC} in $1970$, 
Their work led  to great interest and activity on boundary value problems at resonance which continuous to this day. 
A particularly interesting extension of Landesman and Lazer's work to systems was done by Nirenberg \cite{LN}, \cite{LN1} 
in case of system and the nonlinearity in the equation was  done by Ortega and Ward \cite{OW}, in the scalar case without Landesman-Lazer condition 
was done by Iannacci and Nkashama \cite{IK1}, Ortega and S\'{a}nchez \cite{OS}, more completely the case for periodic solutions of the system 
of ordinary differential equations with bounded nonlinear $g$ satisfying Nirenberg's condition. They studied periodic so solutions 
$$u''+cu'+g(u)=p(t)$$
for $u\in\mathbb{R}^{k}.$\\
 In case $c=0$ was done by Mawhin \cite{M2}. In case the nonlinear terms vanish at infinity, as in (\ref{e3}), the Landesman-Lazer
 conditions fail. We would like to know what we can do in this case, and what conditions on a bounded nonlinearity that 
 vanishes at infinity might replace that ones of the Landesman-Lazer type. Several authors have considered the case when the nonlinearity
$g\colon\partial\Omega\times\mathbb{R}\rightarrow\mathbb{R}$ is a scalar function
satisfies Carath\'{e}odory conditions
i,e.; 
\begin{description}
 \item[i] $g(.,u)$ is measurable on $\partial\Omega$, for each $u\in\mathbb{R}$,
\item[ii] $g(x,.)$ is continuous on $\mathbb{R},$ for $a.e.x\in \partial\Omega$,
\item[iii]for any constant $r>0$, there exists a function \\$\gamma_{r}\in L^{2}(\partial\Omega),$ such that 
\begin{equation}\label{00}
\begin{gathered}
|g(x,u)|\leq\gamma_{r}(x),
\end{gathered}
\end{equation}

for $a.e.x\in \Omega$, and all $u\in\mathbb{R}$ with $|u|\leq r,$
\end{description}
Was done by Fadlallah \cite{AF}
 and the others have considered the case when the nonlinearity does not decay to zero very rapidly. For example in case the nonlinearity in the equation
if $g=g(t)$ is a scalar function, the condition 
\begin{equation}\label{01}
\begin{gathered}
\lim_{|t|\to\infty}tg(t)>0
\end{gathered}
\end{equation}
 and related ones were assumed in \cite{Amb}, \cite{Ambr}, \cite{Au}, \cite{FK}, \cite{PH}, \cite{MK1}, \cite{MK2}, \cite{MN}, \cite{Ru}. These 
 papers all considered scalar problem, but also considered the Dirichlet (Neumann) problem  at resonance (non-resonance) at higher 
 eigenvalues (Steklov-eigenproblems). The work in some  of these papers makes use of Leray-Schauder degree arguments, and the others using 
 critical point theory both the growth restrictions like \ref{01} and Lipschitz conditions have been removed (see \cite{MK2}, \cite{Ru}).
 In this paper we study systems of elliptic boundary value problems with nonlinear boundary conditions Neumann type and the nonlinearities at boundary 
 vanishing at the infinity.  We do not require the problem to be in variational from. 
 \subsection{Assumptions}
 \begin{description}
  \item[G1] $g\in C^{1}(\mathbb{R}^{k},\mathbb{R}^{k})$ and $g$ is bounded with $g(w)\neq 0$  for $|w|_{E}$ large. Let $S^{k-1}$
  be the unit sphere in $\mathbb{R}^{k}$ 
  \item[G2] We will assume that $S^{k-1}\cap\partial\Omega\neq\emptyset$ and Let $\mathbb{S}=S^{k-1}\cap\partial\Omega$
  \item[G3] For each $z\in\mathbb{S}$ the $\displaystyle\lim_{r\to\infty}\frac{g(rz)}{|g(rz)|_{E}}=\varphi(z)$ exists, and the limits is uniform for 
  $z\in \mathbb{S}.$
  \item It follows that $\varphi\in C(\mathbb{S},\mathbb{S})$ and the topological degree of $\varphi$ is defined.
  \item[G4] $deg(\varphi)\neq 0$
 \end{description}
 \subsection{Notations}
 \begin{itemize}

\item  Let $\<.,.\>_{L^{2}}$ denote the inner product in $L^{2}:=L^{2}(\Omega,\mathbb{R}^{k})$  where $L^{2}$ is Lebesgue space 
 \item Let $\<.,.\>_{E}$ denote the standard inner product in $\mathbb{R}^{k}$
  for $u,v\in H^{1}=H^{1}(\Omega,\mathbb{R}^{k})$ where $H^{1}$ the Sobolev space 
  \end{itemize}
  We note that if follows from the assumptions G1-G4 that on large balls 
$$B(R):=\{y:|y|_{E}\leq R\},$$ the $deg(g,B(R),0)\neq 0$ see \cite{L},\cite{Ma}. \\
We modify the Lemma 1 and Theorem 1 \cite{OW} to fit our problem. 
\begin{lemma}\label{le1}
 assume that G1 and G3 hold and $C>0$ is a given constant. Then there is $R>0$ such that 
 $$\int_{\partial\Omega}g(u(x))\,dx\neq 0,$$
 for each function $u\in C(\partial\Omega,\mathbb{R}^{k})$ (we can write $u=\bar{u}+\tilde{u}$ where $\bar{u}=\int_{\partial\Omega}u(x)dx=0,$ 
 and $\bar{u}\bot\tilde{u}$) with $|\bar{u}|_{E}\geq R$ and $||u-\bar{u}||_{L^{\infty}(\partial\Omega)}\leq C$
\end{lemma}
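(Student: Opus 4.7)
The plan is to argue by contradiction. Suppose no such $R$ exists; then I can find a sequence $u_n = \bar{u}_n + \tilde{u}_n \in C(\partial\Omega,\mathbb{R}^k)$ with $r_n := |\bar{u}_n|_E \to \infty$, $\|\tilde{u}_n\|_{L^{\infty}(\partial\Omega)} \leq C$, and $\int_{\partial\Omega} g(u_n(x))\,dx = 0$ for every $n$. The goal is to extract a limiting direction from $\bar{u}_n$, show that the unit vectors $g(u_n(\cdot))/|g(u_n(\cdot))|_E$ are all close to $\varphi$ of that common direction, and then derive a contradiction by pairing the assumed identity with $\varphi(z^*)$.

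First I would write $\bar{u}_n = r_n z_n$ with $z_n \in S^{k-1}$ and pass to a subsequence so that $z_n \to z^* \in S^{k-1}$. Since $|u_n(x)|_E \geq r_n - C \to \infty$ uniformly in $x$, setting $\zeta_n(x) := u_n(x)/|u_n(x)|_E$, the routine estimate
\[
|\zeta_n(x) - z_n|_E \;\leq\; \frac{|\tilde u_n(x)|_E + \bigl||u_n(x)|_E - r_n\bigr|}{|u_n(x)|_E} \;\leq\; \frac{2C}{r_n - C}
\]
shows that $\zeta_n(x) \to z^*$ uniformly on $\partial\Omega$.

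Next I would combine the uniform convergence in (G3) with the continuity of $\varphi$ on $S^{k-1}$: given $\varepsilon>0$, pick $\delta>0$ with $|\varphi(z)-\varphi(z^*)|_E<\varepsilon/2$ whenever $|z-z^*|_E<\delta$, and $r_0$ so that $\bigl|g(rz)/|g(rz)|_E - \varphi(z)\bigr|_E<\varepsilon/2$ for all $r>r_0$ and $z \in S^{k-1}$ (well defined by (G1)). Writing $u_n(x) = |u_n(x)|_E\,\zeta_n(x)$ and choosing $n$ large enough that $|u_n(x)|_E>r_0$ and $|\zeta_n(x)-z^*|_E<\delta$ for every $x \in \partial\Omega$, the triangle inequality yields
\[
\left|\frac{g(u_n(x))}{|g(u_n(x))|_E}-\varphi(z^*)\right|_E<\varepsilon \qquad \text{uniformly in } x\in\partial\Omega.
\]

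Finally I would take $\varepsilon=1/2$, so that $\<g(u_n(x))/|g(u_n(x))|_E,\varphi(z^*)\>_E\geq 1/2$ for every $x$. Pairing the identity $\int_{\partial\Omega}g(u_n)\,dx=0$ with $\varphi(z^*)$ then produces
\[
0 = \int_{\partial\Omega}|g(u_n(x))|_E \,\<g(u_n(x))/|g(u_n(x))|_E,\varphi(z^*)\>_E\,dx \;\geq\; \tfrac{1}{2}\int_{\partial\Omega}|g(u_n(x))|_E\,dx\;>\;0,
\]
the desired contradiction, where positivity of the last integral uses (G1). The main obstacle is to transfer (G3) from its $x$-independent ray form to the $x$-dependent directions $\zeta_n(x)$ uniformly on $\partial\Omega$; this is precisely what the uniformity clause in (G3) and the continuity of $\varphi$ are built for, and once that is in place the rest is a direct contradiction argument.
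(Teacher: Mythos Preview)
Your argument is correct and follows essentially the same route as the paper: a contradiction via a sequence with $|\bar u_n|_E\to\infty$, extraction of a limiting direction $z^*\in S^{k-1}$, uniform convergence of $g(u_n)/|g(u_n)|_E$ to $\varphi(z^*)$ via (G3), and pairing the vanishing integral with $\varphi(z^*)$ to force a strictly positive quantity to equal zero. Your write-up is in fact more careful than the paper's in two places: you give the explicit estimate $|\zeta_n(x)-z_n|_E\le 2C/(r_n-C)$ for the uniform convergence of directions, and you spell out how the uniformity clause in (G3) together with the continuity of $\varphi$ transfers the ray-wise limit to the $x$-dependent directions $\zeta_n(x)$; the paper simply asserts this step. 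The only cosmetic difference is your lower bound $1/2$ versus the paper's $1/4$ for the inner product, which is immaterial.
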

\begin{proof}
 By the way of contradiction. Assume that for some $C>0$ there is exist a sequence of functions 
 $\{u_{n}\}_{n=1}^{\infty}\in C(\bar{\Omega},\mathbb{R}^{k}),$ with 
 $$|\bar{u}_{n}|_{E}\to\infty,~~||u_{n}-\bar{u}_{n}||_{L^{\infty}(\partial\Omega)}\leq C$$ and 
 \begin{equation}\label{02}
\begin{gathered}
\int_{\partial\Omega}g(u_{n}(x))\,dx=0
\end{gathered}
\end{equation}

We constructed a subsequence of $u_n$ one can assume that $\bar{z}_{n}=\frac{\bar{u}_n}{|\bar{u}_{n}|_{E}}$ converges to some 
point $z\in\mathbb{S}.$ The uniform bound on $u_{n}-\bar{u}_{n}$ implies that also $\frac{u_{n}}{|u_{n}|_{E}}$ converges to $z$ and this convergence 
is uniform with respect to $x\in\bar{\Omega}.$ It follows  from the assumption G3 that 
$$\displaystyle\lim_{n\to\infty}\frac{g(u_{n}(x)}{|g(u_{n}(x))|_{E}}=\varphi(z)$$
uniformly in $\bar{\Omega}.$ Since $\varphi(z)$ is in the unit sphere one can find an integer $n_{0}$ such that if 
$n\geq n_{0}$ and $x\in\bar{\Omega}$, then 
$$\<\frac{g(u_{n}(x)}{|g(u_{n}(x))|_{E}},\varphi(z)\>_{E}\geq \frac{1}{4}$$
Define 
$$\gamma_{n}(x)=|g(u_{n}(x))|_{E}.$$ By G1 clearly $\gamma_{n}>0$ everywhere. 
For $n\geq n_{0}$
$$\<\int_{\partial\Omega}g(u_{n}(x))\,dx,\varphi(z)\>_{E}=\int_{\partial\Omega}\<g(u_{n}(x)),\varphi(z)\>_{E}\,dx$$
$$=\int_{\partial\Omega}\gamma_{n}(x)\<\frac{g(u_{n}(x))}{\gamma_{n}(x)},\varphi(z)\>_{E}\,dx\geq\frac{1}{4}\int_{\partial\Omega}\gamma_{n}(x)\,dx>0$$
Therefore $\displaystyle\int_{\partial\Omega}g(u_{n}(x))\,dx>0.$ 
Now we have contradiction with (\ref{02})\\
The proof completely of the lemma.
\end{proof}
\section{Main Result}
Let \begin{equation}\label{N1}
\begin{gathered}
Qu=Nu
\end{gathered}
\end{equation}
be a semilinear elliptic boundary value problem. Suppose $N$ is continuous and
bounded (i.e.,$|Nu|_{E}\leq C$ for all $u$).
If $Q$ has a compact inverse $Q^{-1}$ then by Leray-Schauder theory (\ref{N1}) has a solution. On the other hand if
$Q$ is not invertible the existence of a solution depends on the behavior of $N$ and its interaction with the
null space of $Q$ see \cite{Ma}. 
\begin{theorem}\label{th}
 Assume  That $g\in C^{1}(\mathbb{R}^{k},\mathbb{R}^{k})$ satisfies G1, G3, and G4. If $h\in C(\partial\Omega,\mathbb{R}^{k}),$
 satisfies $\bar{h}=0$ then (\ref{e1}) has at least one solution.
\end{theorem}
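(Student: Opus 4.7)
The plan is to reformulate (\ref{e1}) as a compact fixed--point equation and to apply Leray--Schauder degree along a homotopy whose terminal degree reduces to $\deg(\varphi) \neq 0$ supplied by G4; the uniform a priori bound along the homotopy will be provided by Lemma \ref{le1} in combination with $\bar h = 0$.

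First, I would introduce the solution operator $K$ for the linear problem $-\Delta u + A(x)u = 0$ in $\Omega$, $\partial u/\partial\nu = f$ on $\partial\Omega$. Assumptions A1 and A2 render the form $\int_{\Omega}|\nabla v|^2 + \int_{\Omega}\langle A(x)v,v\rangle$ coercive on $H^{1}(\Omega,\mathbb{R}^k)$, so $K$ is well--defined and, by standard elliptic regularity together with the trace theorem, compact from $C(\partial\Omega,\mathbb{R}^k)$ into $C(\bar\Omega,\mathbb{R}^k)$. Composing $K$ with the boundary trace and the Nemytskii map $u \mapsto h - g(u)$ yields a compact nonlinear operator $T$ on $C(\bar\Omega,\mathbb{R}^k)$, so (\ref{e1}) is equivalent to $u = T(u)$, and I will seek $u$ as a zero of $F := I - T$ via Leray--Schauder degree, as outlined in the general scheme (\ref{N1}).

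Next, I would construct a homotopy $F_\lambda$, $\lambda \in [0,1]$, with $F_1 = F$ and $F_0$ designed so that its zeros are parametrized by zeros of a finite--dimensional map $\Psi : \mathbb{R}^k \to \mathbb{R}^k$ of the form $\Psi(y) = \int_{\partial\Omega} g(y + \tilde u_0(y))\,d\sigma$, where $\tilde u_0(y)$ is the oscillatory part of the boundary trace of the solution of the linear problem at $\lambda = 0$. By G1 and G3 combined with the uniform convergence of $g(rz)/|g(rz)|_E$ on $\mathbb{S}$, for $|y|_E$ large the integrand is well approximated by a positive multiple of $\varphi(y/|y|_E)$; thus $\Psi$ is homotopic on large spheres in $\mathbb{R}^k$ to $\varphi$ and hence $\deg(\Psi, B_R(0), 0) = \deg(\varphi) \neq 0$ by G4. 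A standard reduction--of--degree argument in the spirit of \cite{Ma} then yields $\deg(F_0, B_R, 0) = \deg(\varphi) \neq 0$ on a large ball $B_R \subset C(\bar\Omega,\mathbb{R}^k)$.

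The principal obstacle is the uniform a priori bound for zeros of $F_\lambda$. Writing $u|_{\partial\Omega} = \bar u + \tilde u$, coercivity of the linear operator restricted to mean--zero data together with the bound (\ref{e2}) on $g$ yields $\|\tilde u\|_{L^{\infty}(\partial\Omega)} \leq C(\|h\|_\infty + R)$ uniformly in $\lambda$. For the constant part $\bar u \in \mathbb{R}^k$, integrating the boundary condition over $\partial\Omega$ and invoking $\bar h = 0$ produces an identity that couples $\int_{\partial\Omega} g(u)\,d\sigma$ to a bulk quantity controlled in terms of $\|u\|_\infty$. Along any hypothetical sequence of solutions with $|\bar u|_E \to \infty$, Lemma \ref{le1} (whose hypotheses are met because $\|\tilde u\|_{L^{\infty}(\partial\Omega)}$ is uniformly bounded) forces $\int_{\partial\Omega} g(u)\,d\sigma$ away from $0$ in the direction $\varphi(\bar u/|\bar u|_E)$, contradicting the identity. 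The resulting a priori bound, combined with homotopy invariance of the Leray--Schauder degree, gives $\deg(F, B_R, 0) = \deg(F_0, B_R, 0) = \deg(\varphi) \neq 0$, producing a solution of (\ref{e1}).
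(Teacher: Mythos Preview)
Your plan is essentially the paper's proof: cast the problem abstractly, run a Leray--Schauder homotopy, bound the oscillatory part $\tilde u$ from the linear estimates together with the boundedness (\ref{e2}) of $g$, bound the mean $\bar u$ by contradiction with Lemma~\ref{le1} (using $\bar h=0$ so that the projected equation forces $\int_{\partial\Omega}g(u)\,d\sigma=0$), and identify the terminal degree with $\deg(\varphi)\neq 0$ via G4. The only cosmetic difference is that the paper works in Mawhin's coincidence--degree language (homotopy $Qu=\lambda Nu$ with $\ker Q=\mathbb{R}^k$ and $\deg(PN|_{\ker Q},B_r,0)=\deg(g,B_r,0)$), whereas you phrase the same reduction as a fixed--point equation $u=T(u)$ with a Lyapunov--Schmidt step; the two frameworks are equivalent here.
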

\begin{proof}
Define 
 $$Dom(L):=\{u\in H^{1}(\Omega): -\Delta u + A(x)u = 0\} $$
 Define an operator $L$ on  $L^{2}=L^{2}(\Omega,\mathbb{R}{k})$ for $u\in Dom(L)$ and each $v\in H^{1}(\Omega)$ by 
 $$Lu=\frac{\partial u}{\partial\nu},~~~\forall~~u\in Dom(L)$$
 we use the embedding theorem see \cite{Bz}since you know that $H^{1}(\Omega)\hookrightarrow L^{2}(\Omega)$
 and the trace theorem $H^{1}\to L^{2}(\partial\Omega)$, thus  $L:Dom(L)\subset L^(\partial\Omega)\to L^{2}(\partial\Omega)$ then the 
 equation 
 $$<Lu,v>=<h,v>_{L^{2}(\partial\Omega)}~~~\forall~~v\in H^{1}(\partial\Omega)$$
 if and only if 
 $$Lu=h.$$
 The latter equation is solvable if and only if 
 $$Ph:=\frac{1}{|\partial\Omega|}\int_{\partial\Omega}h=0$$
 Now if $h\in L^{\infty}(\partial\Omega,\mathbb{R}^{k})$ and $Ph=0$ then each solution $u\in H^{1}(\Omega)$ is 
 H\"{o}lder continuous, so $u\in C^{\gamma}(\bar{\Omega},\mathbb{R}^{k})$ for some $\gamma\in (0,1).$
 Since we know that there is constant $r_{1}$ such that 
 $$||u||_{\gamma}\leq r_{1}\left(||u||_{L^{2}(\partial\Omega)}+||h||_{L^{\infty}(\partial\Omega)}\right)$$
 When $Ph=0$ there is a unique solution $Kh=\tilde{u}\in H^{1}(\Omega)$ with $P\tilde{u}=0$
 to 
 $$Lu=h,$$
and if $h\in C(\partial\Omega)=C(\partial\Omega,\mathbb{R}^{k})$ then
$$||Kh||_{\gamma}\leq r_{1}\left(||Kh||_{L^{2}(\partial\Omega)}+||h||_{L^{\infty}(\partial\Omega)}\right)\leq r_{2}||h||_{C(\partial\Omega)}$$
and $K$ maps $C(\partial\Omega)$ into itself  take compact set to compact set i.e.; compactly.\\
Let $Q$ be the restriction of $L$ to $L^{-1}(C(\partial\Omega))=KC(\partial\Omega)+\mathbb{R}^{k}.$
We  define $N:C(\partial\Omega)\to C(\partial\Omega)$ define by 
$$N(w)(x):=h(x)-g(w(x))~~\forall w\in C(\partial\Omega)$$
is continuous. Now (\ref{e1}) can be written as 
$$Qu=Nu$$
and $\ker Q=\Im P,$ $\Im Q=\ker P.$ The linear map $Q$ is a Fredholm map (see \cite{MN}) and $N$ is $Q-$compact (see \cite{Ma}).
Now we define the Homotopy equation as follows 
Let $\lambda \in [0,1]$ such that 
\begin{equation}\label{H}
\begin{gathered}
Qu=\lambda Nu.
\end{gathered}
\end{equation}
The a priori estimates (i.e.; the possible solutions of (\ref{H}) are uniformly bounded in $C(\partial\Omega)$) 
Now we show that the possible solutions of (\ref{H}) are uniformly bounded in $C(\partial\Omega)$
independent of $\lambda\in [0,1]$
Since we know that $u=\bar{u}+\tilde{u}$ where $\bar{u}=Pu.$ Then
$$||\tilde{u}||_{\gamma}=||\lambda KNu||_{\gamma}\leq r_{2}||Nu||_{C(\partial\Omega)}\leq R_{1}$$
Where $R_{1}$ is a constant ($g$ is abounded function). 
It remains to show that $\bar{u}\in\mathbb{R}^{k}$ is bounded, independent of $\lambda\in[0,1]$. 
By the way of contradiction assume is not the case (i.e.; $\bar{u}$ unbounded). Then there are sequence $\{\lambda_{n}\}\subset[0,1],$
and $\{u_{n}\}\subset Dom (Q)$ with 
$||\tilde{u}_{n}||_{\gamma}\leq R_{1},$
$$Qu_{n}=\lambda_{n}Nu_n~~{\rm and } ~~|\bar{u}_{n}|_{E}\to\infty$$
We get that 
$$PNu_n=PN(\tilde{u}_n+\bar{u}_n)=-\int_{\partial\Omega}g(\tilde{u}_{n}(x)+\bar{u}_{n}(x))\,dx=0$$
Now $u_{n}=\tilde{u}_n+\bar{u}_n$ so $||{u}_n-\bar{u}_n||_{L^{\infty}(\partial\Omega)}=||\tilde{u}_{n}||_{L^{\infty}(\partial\Omega)}\leq R_{1}$
and $||\bar{u}_{n}||_{L^{\infty}(\partial\Omega)}\to\infty$. It follows from Lemma\ref{le1} that for all sufficiently large $n$ 
$$\int_{\partial\Omega}g(u_{n}(x))\,dx\neq 0$$
We have reached a contradiction, and hence all possible solutions of (\ref{H}) are uniformly bounded in $C(\partial\Omega)$ independent of $\lambda\in[0,1]$
\\
Let $\bar{B}(0,r)=\{x:|x|_{E}\leq r\}$ denote the ball in $C(\partial\Omega,\mathbb{R}^{k})$ 
Now you  can apply Leray-Schauder degree theorem see (\cite{L},\cite{Ma}), the only thing left to show is that
$$deg(PN,\bar{B}(0,r)\cap\ker Q,0)\neq0,$$ for large $r>0.$
So 
$deg(PN,\bar{B}(0,r)\cap\ker Q,0)=deg(g,\bar{B}_{r},0),$ where $\bar{B}_{r}$ is the ball in $\mathbb{R}^{k}$ of radius $r$. Since for $|x|_{E}$
large, and $deg(\varphi)\neq0$ we have that 
$deg(g,\bar{B}_{r},0)\neq 0$ for large $r$.
Therefore $deg(PN,\bar{B}(0,r)\cap\ker Q,0)\neq 0$
By Leray-Schauder degree theorem equation (\ref{H}) has a solution when $\lambda=1.$ Therefore equation (\ref{e1}) has at least one solution.
This proves the theorem.
\end{proof}
We will give one Example 
\begin{example}
 Let $\Omega\subset\mathbb{R}^N$ , $N\geq 2$ is a bounded domain  with boundary
$\partial \Omega$ of class $C^{\infty}$. Let 
\begin{equation}\label{ex}
\begin{gathered}
-\Delta u + A(x)u = 0 \quad\text{in } \Omega,\\
\frac{\partial u}{\partial \nu}+\frac{u}{1+|u|_{E}^{3}}=h(x) \quad\text{on } \partial\Omega
\end{gathered}
\end{equation}
 where  $A(x)$ is positive semidefinite matrix on $\mathbb{R}^{{2}\times{2}},$ and 
Where $u=(u_1,u_2)\in \mathbb{R}^{2}$ and $h$ real valued function and continuous on $\partial\Omega,$ and 
$\int_{\partial\Omega}h(x)\,dx=0$
and $g(u)=\frac{u}{1+|u|_{E}^{2}}$  
$$\displaystyle\lim_{u\to\infty}g(u)=\displaystyle\lim_{|u|_{E}\to\infty}\frac{u}{1+|u|_{E}^{2}}=0$$
$g(u)$ vanishes at infinity,  clearly $g\in C^{1}(\mathbb{R}^{2},\mathbb{R}^{2})$ and bounded with $g(u)\neq0,$ for $|u|_{E}$ large.
Therefore $g$ satisfies G1.\\
$$\frac{g(ru_1,ru_2)}{|g(ru_1,ru_2)|}=\frac{g(ru)}{|g(ru)|}=\frac{\frac{ru}{1+|ru|_{E}^{2}}}{\left|\frac{ru}{1+|ru|_{E}^{2}}\right|}=
\frac{u}{|u|_{E}^{2}}=u$$
For all $u$ in $\mathbb{S}$ and $r>0$. Therefore G3 holds.\\
And $\varphi(u)=u$ so that $deg(\varphi)\neq 0.$Therefore G4 holds. By theorem\ref{th} (\ref{ex}) has at least one solution. 
\end{example}

\end{document}